\documentclass{amsart}%
\usepackage{amsfonts}
\usepackage{amsmath}
\usepackage{amssymb}
\usepackage{graphicx}
\usepackage{enumerate}
\usepackage{xcolor}%
\usepackage[pdftex,pdfborderstyle={/S/U/W 1},hyperfootnotes=false,hidelinks]{hyperref}
\usepackage{mathtools}
\mathtoolsset{centercolon}
\setcounter{MaxMatrixCols}{30}
\newtheorem{theorem}{Theorem}
\theoremstyle{plain}

\newtheorem{corollary}[theorem]{Corollary}

\newtheorem{example}{Example}

\newtheorem{proposition}[theorem]{Proposition}
\newtheorem{remark}[theorem]{Remark}

\numberwithin{equation}{section}

\newcommand{\assign}{:=}
\newcommand{\tmmathbf}[1]{\ensuremath{\boldsymbol{#1}}}
\newcommand{\tmop}[1]{\ensuremath{\operatorname{#1}}}

\begin{document}
\title[The {\L}ojasiewicz exponent]{The {\L}ojasiewicz exponent in non-degenerate deformations of surface singularities}
\address[S.\ Brzostowski, T.\ Krasi\'{n}ski, and G.\ Oleksik]{Faculty~of~Mathematics and Computer~Science\\
University of {\L}{\'o}d{\'z}\\
ul. Banacha 22, 90-238 {\L}{\'o}d{\'z}\\
Poland}
\author{Szymon Brzostowski}
\email{szymon.brzostowski@wmii.uni.lodz.pl}
\author{Tadeusz Krasi\'{n}ski}
\email{tadeusz.krasinski@wmii.uni.lodz.pl}
\author{Grzegorz Oleksik}
\email{grzegorz.oleksik@wmii.uni.lodz.pl}
\subjclass[2010]{Primary 32S05}
\keywords{surface singularity; isolated singularity; {\L}ojasiewicz exponent;
non-degenerate singularity; Newton polyhedron; $\mu$-constant deformation}

\begin{abstract}
We prove the constancy of the \L ojasiewicz exponent in non-degenerate $\mu
$-constant deformations of surface singularities. This is a positive answer to
a question posed by B.\ Teissier.

\end{abstract}
\maketitle

\section{Introduction}

Let $f(z)=f(z_{1},\ldots,z_{n})\in\mathbb{C}\{z_{1},\ldots,z_{n}%
\}=:\mathcal{O}_{n}$ be a convergent power series defining an \textit{isolated
singularity }at the origin $0\in\mathbb{C}^{n}$, i.e., $f(0)=0$ and the
gradient of $f$,%
\[
\nabla f:=\left(  \frac{\partial f}{\partial z_{1}},\ldots,\frac{\partial
f}{\partial z_{n}}\right)  :(\mathbb{C}^{n},0)\rightarrow(\mathbb{C}^{n},0),
\]
has an isolated zero at $0\in\mathbb{C}^{n}$. The \textit{\L ojasiewicz
exponent }$\mathcal{L}_{0}(f)$ of $f$ is the smallest ($=$ infimum) $\theta>0$
such that there exists a neighbourhood $U$ of $0\in\mathbb{C}^{n}$ and a
constant $C>0$ such that%
\[
\left\vert \nabla f(z)\right\vert \geq C\left\vert z\right\vert ^{\theta
}\text{ \ \ \ for }z\in U.  
\]

\begin{remark}
One can similarly define the \L ojasiewicz exponent $\mathcal{L}%
_{0}(F)$ of any holomorphic mapping $F:(\mathbb{C}^{n},0)\rightarrow
(\mathbb{C}^{p},0)$ having an isolated zero at $0\in\mathbb{C}^{n}$ (replacing
$\nabla f$ by $F$ in the above definition). Moreover, using the usual conventions for the infimum,
one can also extend this definiton to any holomorphic mapping $F:(\mathbb{C}^{n},0)\rightarrow
\mathbb{C}^{p}$; then $\mathcal{L}_{0}(F)=0$ if $F(0)\neq 0$, and $\mathcal{L}_{0}(F)=+\infty$
if $F(0)=0$ but the zero is not isolated.
\end{remark}

It is known $\mathcal{L}_{0}(f)$ is a rational positive number and it is an
analytic invariant of $f$. Moreover, it depends only on the ideal $(\partial f/\partial
z_{1},\ldots,\partial f/\partial z_{n})$ in $\mathcal{O}_{n}$ and can be calculated
using analytic paths, i.e.,
\[
\mathcal{L}_{0}(f)=\sup_{\Phi}\frac{\operatorname*{ord}\left(  \nabla
f\circ\Phi\right)  }{\operatorname*{ord}\Phi}=\max_{\Phi}\frac{\operatorname*{ord}\left(  \nabla
f\circ\Phi\right)  }{\operatorname*{ord}\Phi},
\]
where $0\neq\Phi=(\varphi_{1},\ldots,\varphi_{n})\in\mathbb{C}\{t\}^{n},$
$\Phi(0)=0$, and $\operatorname*{ord}\Phi:=\min_{i}\operatorname*{ord}%
\varphi_{i}$. It is also known that $[\mathcal{L}%
_{0}(f)]+1$ is the degree of $C^{0}$-sufficiency of $f$; nevertheless,
it is still an open and difficult problem whether the \L ojasiewicz
exponent is a topological invariant, i.e., whether $\mathcal{L}_{0}%
(f)=\mathcal{L}_{0}(g)$ if isolated singularities $f$ and $g$ are
topologically $\mathcal{R}$-equivalent (this is true for $n=2$). The behaviour of
$\mathcal{L}_{0}(f)$ in analytic families of singularities is also enigmatic.
As much as the Milnor number $\mu(f)=\dim_{\mathbb{C}} \mathcal{O}_n / ( \frac{\partial f}{\partial z_1},
\ldots, \frac{\partial f}{\partial z_n} )$ of $f$ is semi-continuous from above in
such families (and this is a quite natural property because the Milnor number is
defined as the multiplicity of a mapping -- exactly the gradient mapping),
the \L ojasiewicz exponent has no such property. Using, for instance, the
formula for the \L ojasiewicz exponent for functions of two variables given by
A.\ Lenarcik \cite{Len98} (see Remark \ref{lenny}), we easily verify

\begin{example}
1. For the family $f_{t}(x,y)=x^{2}+ty^{2}+y^{3},$ $t\in\mathbb{C},$ we have
$\mathcal{L}_{0}(f_{0})=2$ and $\mathcal{L}_{0}(f_{t})=1$ for $t\neq0.$

2. For the family $f_{t}(x,y)=xy^{5}+tx^{2}+x^{8},$ $t\in\mathbb{C},$ we have
$\mathcal{L}_{0}(f_{0})=7$ and $\mathcal{L}_{0}(f_{t})=9$ for $t\neq0.$
\end{example}

This shows that in order to obtain positive results on the \L ojasiewicz exponent in families of
singularities, one has to impose some assumptions on the families. B.\ Teissier
in \cite{Tei77a} proved that, under the additional assumption of $\mu$-constancy of the
family, the \L ojasiewicz exponent is semi-continuous from below. This
result has been generalized to mappings by A.\ P\l oski in \cite{Plo10}. In the
aforementioned paper B.\ Teissier posed the problem whether in this case (i.e.\ for a $\mu
$-constant family) $\mathcal{L}_{0}(f_{t})$ must also be constant. For $n=2$,
i.e.,\ in the case of plane curve singularities, it is true because $\mu
$-constancy in a family of plane curve singularities $(f_{t}(x,y))$ implies
the curves $\{f_{t}(x,y)=0\}$ are pairwise topologically equivalent 
(by the L\^{e}-Ramanujam theorem, for instance). This, in turn, gives the constancy
of $\mathcal{L}_{0}(f_{t})$ as the \L ojasiewicz exponent is a topological
invariant of such singularities (see \cite[Cor.\ 1.5]{Plo01} for a formula for the
\L ojasiewicz exponent expressed in terms of the Puiseux characteristics of the branches of
a curve as well as their intersection multiplicities).

In the article, we solve Teissier's problem in the affirmative for surface singularities, in the
particular case of non-degenerate deformations. More precisely, we prove that in $\mu
$-constant non-degenerate families of surface singularities, the \L ojasiewicz
exponent is also constant. In a similar spirit, we remark that the constancy of the multiplicity ($=$ the order of
$f_{t}$ at $0$) in $\mu$-constant non-degenerate families of hypersurface
singularities of any dimension, which is a particular case of the famous Zariski problem,
has already been settled by Y.\ O.\ M.\ Abderrahmane in \cite{Abd16}.

The proof of our result is based on \cite{BKO20}, where we gave an
explicit formula for the \L ojasiewicz exponent of a non-degenerate surface
singularity in terms of its Newton polyhedron (see formula \eqref{wzor_bko} below),
and on \cite{BKW19} which gives a characterization of $\mu$-constant non-degenerate families of
surface singularities. 
We notice that the characterization from \cite{BKW19} 
has recently been extended (in an equivalent form) to any dimension by M.\ Leyton-{\'A}lvarez,
H.\ Mourtada and M.\ Spivakovsky \cite[Thm.\ 2.15]{LMS20}. Among other papers concerning the properties of
$\mu$-constant non-degenerate families of hypersurface
singularities, one can mention \cite{Oka18}, \cite{Abd16}.

\section{The Newton polyhedron of a singularity}

Let $0\neq f:(\mathbb{C}^{n},0)\rightarrow(\mathbb{C},0)$ be a holomorphic
function defined by a convergent power series $\sum_{\nu\in\mathbb{N}^{n}%
}a_{\nu}z^{\nu},$ $z=(z_{1},\ldots,z_{n}).$ Let $\mathbb{R}_{+}^{n}%
:=\{(x_{1},\ldots,x_{n})\in\mathbb{R}^{n}:x_{i}\geq0,\ i=1,\ldots,n\}.$ We
define $\operatorname*{supp}f:=\{\nu\in\mathbb{N}^{n}:a_{\nu}\neq
0\}\subset\mathbb{R}_{+}^{n}.$ In the sequel, we will identify $\nu=(\nu
_{1},\ldots,\nu_{n})\in\operatorname*{supp}f$ with their associated monomials
$z^{\nu}=z_{1}^{\nu_{1}}\cdots z_{n}^{\nu_{n}}.$ We define the \textit{Newton
polyhedron} $\Gamma_{+}(f)\subset\mathbb{R}_{+}^{n}$ \textit{of} $f$ as the
convex hull of $\{\nu+\mathbb{R}_{+}^{n}:\nu\in\operatorname*{supp}f\}.$ We
say $f$ is \textit{convenient} if $\Gamma_{+}(f)$ has non-empty intersection
with each coordinate $x_{i}$-axis $(i=1,\ldots,n)$. Let $\Gamma(f)$ be the
set of compact boundary faces of any dimension of $\Gamma_{+}(f)$ -- the
\textit{Newton boundary }of $f$. Denote by $\Gamma^{k}(f)$ the set of all
$k$-dimensional faces of $\Gamma(f),$ $k=0,\ldots,n-1.$ Then $\Gamma(f)=%
{\bigcup_{k}}\Gamma^{k}(f)$. For each face $S\in\Gamma(f)$, we define the quasihomogeneous
polynomial $f_{S}:=\sum_{\nu\in S}a_{\nu}z^{\nu}$. We say $f$ is
\textit{non-degenerate on} $S$ if the system of polynomial equations
$\{\frac{\partial f_{S}}{\partial z_{1}}= \dots =\frac{\partial f_{S}}{\partial z_{n}}=0\}$ has no solution in
$(\mathbb{C}^{\ast})^{n}$; $f$ is \textit{non-degenerate (in the Kushnirenko
sense)}\/ if $f$ is non-degenerate on each face $S\in\Gamma(f)$.

For each $(n-1)$-dimensional (compact) face $S\in\Gamma^{n-1}(f)$ the unique
affine hyperplane $\Pi_{S}$ containing $S$ intersects each coordinate
$x_{i}$-axis in a point with a positive $x_{i}$-coordinate $m(S)_{x_{i}}.$ We
define%
\[
m(S):=\max_{i}\, m(S)_{x_{i}}.
\]

It is interesting that for typical non-degenerate isolated singularities $f$ in
$\mathbb{C}^{2}$ and $\mathbb{C}^{3}$ the \L ojasiewicz exponent $\mathcal{L}_{0}(f)$ can be
expressed in terms of $m(S)$ where $S$ run over some special $(n-1)$%
-dimensional faces of $\Gamma(f).$ We define them now.

We say $S\in\Gamma^{n-1}(f)$ is \textit{exceptional with respect to the
}$x_{i}$-\textit{axis} if one of the partial derivatives $\frac{\partial
f_{S}}{\partial z_{j}},$ $j\neq i,$ is a pure power of $z_{i}.$ Geometrically,
this means $S$ is an $(n-1)$-dimensional pyramid with the base lying in one of
the $(n-1)$-dimensional coordinate hyperplanes containing the $x_{i}$-axis and
with the apex lying at distance $1$ from this axis (see Figure \ref{excfig}).
We denote the set of exceptional faces of $f$ with respect to $x_{i}$-axis by
$E_{x_{i}}(f).$%

\begin{figure}
\includegraphics[
width=4.4843in
]%
{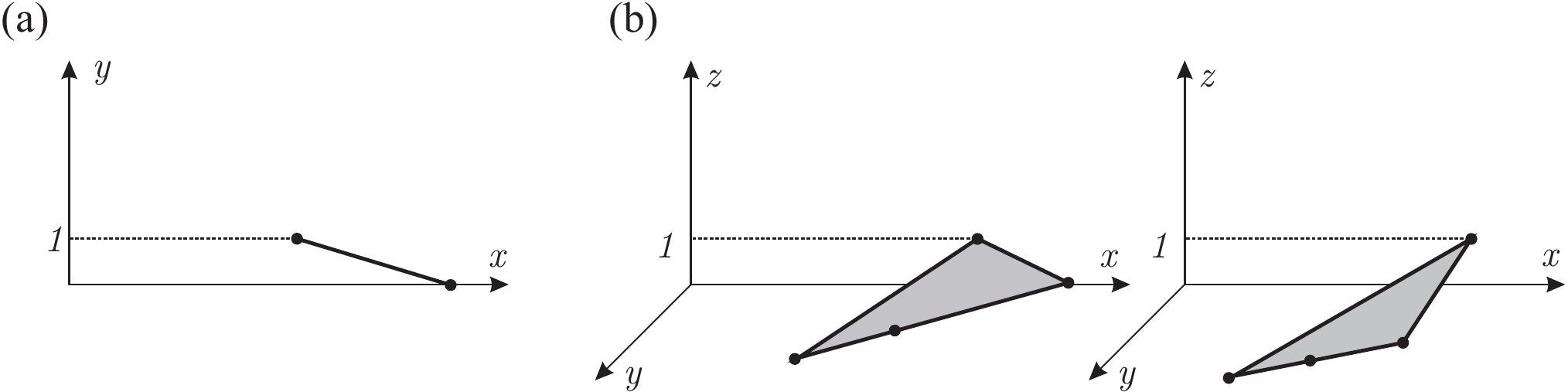}%
\caption{Exceptional faces with respect to the $x$-axis in two and three dimensions}\label{excfig}
\end{figure}

A face $S\in\Gamma^{n-1}(f)$ is \textit{exceptional }if $S$ is exceptional
with respect to some axis. We denote the set of all exceptional faces of $f$ by
$E(f)$. Then the formula for the \L ojasiewicz exponent given in \cite{BKO20} reads

\begin{theorem}\label{bko}
If $f:(\mathbb{C}^{3},0)\rightarrow(\mathbb{C},0)$ is a
non-degenerate isolated surface singularity possessing non-exceptional faces,
i.e., $\Gamma^{2}(f)\setminus E(f)\neq\emptyset$, then%
\begin{equation}\label{wzor_bko}
\mathcal{L}_{0}(f)=\max_{S\in\Gamma^{2}(f)\setminus E(f)}m(S)-1.
\end{equation}

\end{theorem}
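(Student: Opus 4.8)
The plan is to compute $\mathcal{L}_0(f)$ through the analytic-arc characterization $\mathcal{L}_0(f)=\max_\Phi\operatorname{ord}(\nabla f\circ\Phi)/\operatorname{ord}\Phi$ and to establish the two inequalities separately. The basic computational device, valid precisely because of non-degeneracy, is this: for an arc $\Phi=(\varphi_1,\varphi_2,\varphi_3)$ with all components nonzero, set $w:=(\operatorname{ord}\varphi_1,\operatorname{ord}\varphi_2,\operatorname{ord}\varphi_3)$ and let $\xi\in(\mathbb{C}^\ast)^3$ be its vector of leading coefficients. Then the leading quasihomogeneous part of $f$ seen by $\Phi$ is $f_{S(w)}$, where $S(w)$ is the face of $\Gamma(f)$ on which $\nu\mapsto\langle w,\nu\rangle$ attains its minimum $d(w)$, and one has $\operatorname{ord}(\partial f/\partial z_i\circ\Phi)=d(w)-w_i$ exactly when $(\partial f_{S(w)}/\partial z_i)(\xi)\neq 0$, with strictly larger order otherwise (no cancellation of a surviving leading term). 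Since $\xi\in(\mathbb{C}^\ast)^3$ and $f$ is non-degenerate on $S(w)$, the three numbers $(\partial f_{S(w)}/\partial z_i)(\xi)$ cannot all vanish, so at least one partial attains the predicted order.

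For the lower bound I fix a non-exceptional $S\in\Gamma^2(f)$ and let $i$ be an axis realizing $m(S)=m(S)_{x_i}$. Taking $w=w_S$ to be the primitive inner normal of $\Pi_S$, with value $d$ on $S$, the maximality of $m(S)_{x_i}=d/w_i$ forces $w_i=\min_j w_j$. Because $S$ is \emph{not} exceptional with respect to the $x_i$-axis, no partial $\partial f_S/\partial z_j$ with $j\neq i$ is a pure power of $z_i$, and this is exactly the condition that lets me choose leading coefficients $\xi\in(\mathbb{C}^\ast)^3$ with $(\partial f_S/\partial z_j)(\xi)=0$ for every $j\neq i$, while $(\partial f_S/\partial z_i)(\xi)\neq 0$ by non-degeneracy. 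The arc $\Phi(t)=(\xi_1 t^{w_1},\xi_2 t^{w_2},\xi_3 t^{w_3})$ then satisfies $\operatorname{ord}(\nabla f\circ\Phi)=d-w_i$ and $\operatorname{ord}\Phi=w_i$, giving ratio $d/w_i-1=m(S)-1$. Maximizing over $S$ yields $\mathcal{L}_0(f)\geq\max_{S\in\Gamma^2(f)\setminus E(f)}m(S)-1$.

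For the upper bound I take an arbitrary arc. If all its components are nonzero, the device above gives $\operatorname{ord}(\nabla f\circ\Phi)=\min_i\operatorname{ord}(\partial f/\partial z_i\circ\Phi)\leq d(w)-w_i$ for any index $i$ with $(\partial f_{S(w)}/\partial z_i)(\xi)\neq 0$, hence $\operatorname{ord}(\nabla f\circ\Phi)/\operatorname{ord}\Phi\leq d(w)/\min_j w_j-1$. A convexity argument on the normal fan shows that $\sup_w d(w)/\min_j w_j$ is attained when $S(w)$ is a $2$-face and equals $\max_{S\in\Gamma^2(f)}m(S)$; arcs with a vanishing component (those running inside a coordinate hyperplane) are disposed of using the isolated-singularity hypothesis, which also guarantees this supremum is finite. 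It then remains to remove the exceptional faces from the maximum: when $S(w)$ is exceptional with respect to some $x_i$-axis, the relevant partial is a pure power of $z_i$, so its leading coefficient never vanishes on the torus; this forces an \emph{additional} admissible index of large weight into the minimum above and thereby sharpens the bound down to the value of an adjacent non-exceptional face. Carrying out this bookkeeping gives $\operatorname{ord}(\nabla f\circ\Phi)/\operatorname{ord}\Phi\leq\max_{S\in\Gamma^2(f)\setminus E(f)}m(S)-1$ for every arc.

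I expect the main obstacle to be this last step of the upper bound: showing that no arc probing an exceptional face can exceed the non-exceptional maximum. This needs a careful analysis of the pyramidal geometry of exceptional faces together with the combinatorics of the normal fan near them, verifying that the extra pure-power partial always supplies an index of sufficiently large weight and that the resulting bound is realized by a genuinely non-exceptional face — so that the hypothesis $\Gamma^2(f)\setminus E(f)\neq\emptyset$ is actually used. A secondary technical point is the solvability in $(\mathbb{C}^\ast)^3$ of the auxiliary system $\{(\partial f_S/\partial z_j)(\xi)=0:j\neq i\}$ in the lower bound, which must be extracted from the non-exceptional hypothesis and the non-degeneracy of $f$ on $S$.
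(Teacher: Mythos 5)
The paper never proves Theorem~\ref{bko} -- it is quoted from \cite{BKO20} -- so there is no internal proof to compare with, and your argument has to stand on its own. It does not: the lower bound contains a genuine gap. You assert that non-exceptionality of $S$ with respect to the $x_i$-axis ``is exactly the condition'' guaranteeing a solution $\xi\in(\mathbb{C}^\ast)^3$ of the system $\{\partial f_S/\partial z_j(\xi)=0:\ j\neq i\}$. That is false. Non-exceptionality only forbids $\partial f_S/\partial z_j$ from being a pure power of $z_i$; it does not forbid it from being a \emph{mixed} monomial, which has no zeros on the torus at all, nor does it guarantee that the two hypersurfaces meet in the torus. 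Concretely, take $f=xy^2z+y^6+z^3+x^4$: it is convenient and non-degenerate, and $S=\operatorname{conv}\{(1,2,1),(0,6,0),(0,0,3)\}$ is a non-exceptional $2$-face with inner normal $w_S=(2,1,2)$, $d=6$, $m(S)=m(S)_{y}=6$. Your recipe (with $i$ the $y$-axis) demands $\partial f_S/\partial x(\xi)=\xi_2^2\xi_3=0$ with $\xi$ in the torus, which is impossible; the monomial arc $(\xi_1t^2,\xi_2t,\xi_3t^2)$ only ever achieves the ratio $4$, not $m(S)-1=5$. The inequality $\mathcal{L}_0(f)\geq 5$ is nevertheless true, but it is witnessed for instance by $\Phi(t)=(\xi_1t^3,\xi_2t,\xi_3t^3)$ with generic $\xi$ -- an arc whose weight vector $(3,1,3)$ selects the \emph{vertex} $y^6$ of $\Gamma(f)$, not the face $S$. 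So the optimal arc need not be attached to the maximizing non-exceptional face at all, and the lower bound genuinely requires either curved arcs or arcs associated with other strata of the Newton boundary; this is where the real difficulty of \cite{BKO20} lies, and your construction does not reach it.

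The upper bound is likewise only a program. The step you yourself identify as the main obstacle -- showing that arcs probing exceptional faces, lower-dimensional faces $S(w)$, and arcs lying in coordinate hyperplanes never exceed the non-exceptional maximum -- is essentially the content of the estimates of \cite{Ole13} on which \cite{BKO20} is built, and nothing in the sketch substitutes for that analysis. In short, both halves of the proof reduce to precisely the assertions that constitute the theorem's actual difficulty, and the one concrete mechanism you do propose (torus-solvability of the truncated critical system from non-exceptionality alone) is refuted by the example above.
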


\begin{remark}
The case\/ $\Gamma^{2}(f)\setminus E(f)=\emptyset$ is relatively simpler. The
third-named author in \cite[Prop.\ 3.4, Thm.\ 3.8]{Ole13} showed that in this case, if we denote the
variables in $\mathbb{C}^{3}$ by $x,y,z$, there is exactly one segment
$S\in\Gamma^{1}(f)$ joining monomials $xy$ and $z^{k},k\geq2$ (up to permutation
of the variables), and then $\mathcal{L}_{0}(f)=k-1$.
\end{remark}

\begin{remark}\label{lenny}
A.\ Lenarcik in \cite{Len98} proved an alike formula for $n=2$. Precisely,%
\[
\mathcal{L}_{0}(f)=\left\{
\begin{tabular}
[c]{lll}%
$\max_{S\in\Gamma^{1}(f)\setminus E(f)}m(S)-1,$ & if & $\Gamma^{1}(f)\setminus
E(f)\neq\emptyset$\\
$1,$ & if & $\Gamma^{1}(f)\setminus E(f)=\emptyset$%
\end{tabular}
\right.\!\! .
\]

\end{remark}

\begin{remark}\label{pytanko}
It is an open problem if a formula of the above type holds also in the $n$-dimensional case $(n>3)$.
\end{remark}

It turns out the set of non-exceptional faces in formula \eqref{wzor_bko} could be
narrowed -- it suffices to allow only non-exceptional faces having ``furthest
intersections'' with the axes. The definition capturing such faces for surface singularities is as follows.

We say $S\in\Gamma^{2}(f)$ is \textit{proximate for the }$x_{i}%
$\textit{-axis }$(i \in \{1,2,3\})$ if $S$ is a non-exceptional face with respect to the $x_{i}%
$-axis ($S\notin E_{x_{i}}(f))$, has a vertex either on $x_{i}$-axis or
lying at distance $1$ from this axis, and
touches both coordinate planes containing this axis.
Possible proximity faces are illustrated in Figure \ref{prf}.
\begin{figure}
\includegraphics[
height=1.062in,
width=3.4843in
]%
{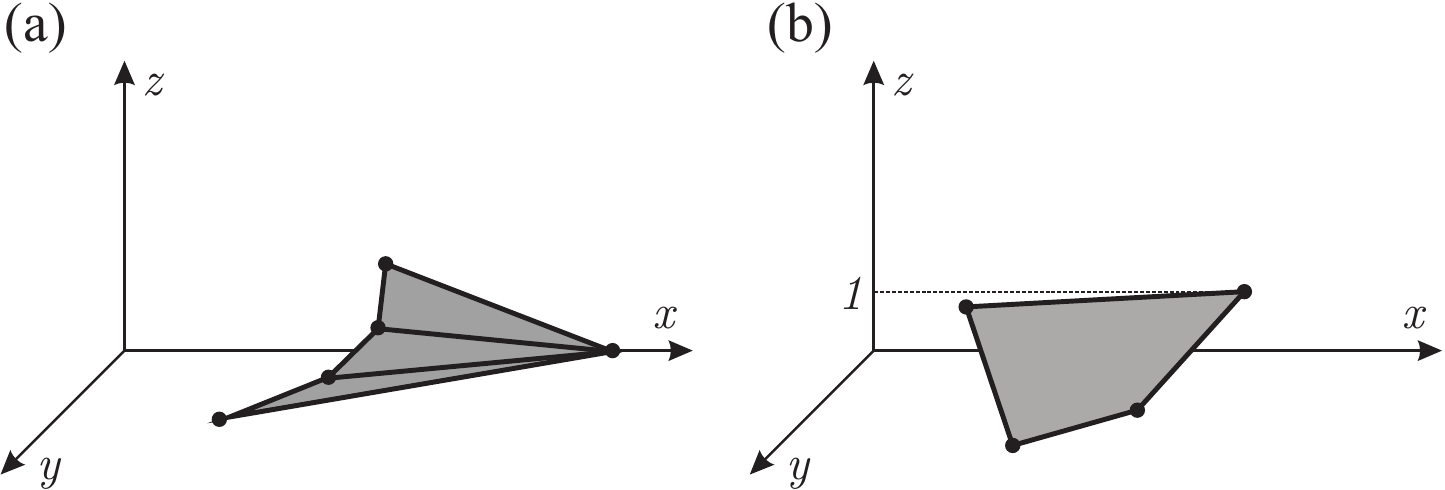}%
\caption{Proximity faces for the $x$-axis: (a) convenient case, (b) non-convenient case.}\label{prf}
\end{figure}

It is easy to prove (cf.\ Lemma 3.1 and Theorem 3.8 in \cite{Ole13} and Lemma 6 in \cite{BKO20}) the
following properties of proximity faces.

\begin{proposition}
\label{prox}
{\hfill}

1. If\/ $\Gamma^{2}(f)\setminus E(f)\neq\emptyset$, then each axis
has a proximate face; moreover, all these faces are non-exceptional.

2. Once exists, a proximate face for a given axis is unique (see Figure  \ref{prf}(b)) if it is
non-convenient with respect to this axis (and not necessarily unique
in the opposite case; see Figure  \ref{prf}(a)).

3. Let $S$ be a proximate face for the $x$-axis.
Then the supporting plane of $S$ has the highest coordinate of intersection
with the $x$-axis among all the $x$-non-exceptional faces.
\end{proposition}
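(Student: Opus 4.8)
The plan is to fix one axis, say the $x$-axis (the $y$- and $z$-axes being handled identically by permuting the coordinates), and to recast everything in the convex geometry of $\Gamma_+(f)$. For a compact $2$-face $S$ with supporting plane $\Pi_S=\{a_1x+a_2y+a_3z=1\}$, where $a_1,a_2,a_3>0$ by compactness, the intersection with the $x$-axis is $m(S)_{x}=1/a_1$, so maximizing $m(S)_{x}$ is the same as minimizing the $x$-component $a_1$ of the (normalized) inner normal of $S$. First I would separate the two regimes that also organize Figure \ref{prf}: the \emph{convenient} case, in which the $x$-axis meets $\Gamma_+(f)$ at a lattice vertex $V=(p_0,0,0)$, and the \emph{non-convenient} case, in which the $x$-axis misses $\Gamma_+(f)$ and the faces nearest the axis carry vertices at lattice distance $1$. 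The structural description of $\partial\Gamma_+(f)$ near an axis given in \cite[Lemma~3.1, Thm~3.8]{Ole13} and \cite[Lemma~6]{BKO20} is what I would use to control the local picture in each case.

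In the convenient case the key observation is that $V\in\Gamma_+(f)$ forces $a_1p_0\ge 1$, hence $m(S)_{x}=1/a_1\le p_0$ for \emph{every} $2$-face $S$, with equality precisely for the faces whose supporting plane passes through $V$, i.e.\ the faces of the ``fan'' around $V$. This at once gives Part~3 (the maximal $x$-intercept equals $p_0$ and is attained exactly on these faces) and the non-uniqueness in Part~2, since several faces of the fan can share $V$; each of them touches both coordinate planes $\{y=0\}$ and $\{z=0\}$ already at $V$ and has the on-axis vertex $V$, so each satisfies the proximity conditions. For existence in Part~1 I would argue that at least one face of the fan is non-exceptional with respect to the $x$-axis; this is where the hypothesis $\Gamma^2(f)\setminus E(f)\neq\emptyset$ is used, to exclude the degenerate situation in which every face meeting the axis is a pyramid of exceptional type.

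In the non-convenient case I would pin the proximate face down as the unique $2$-face that simultaneously reaches both planes $\{y=0\}$ and $\{z=0\}$ while carrying a distance-$1$ vertex. Uniqueness should come from the combinatorics of the normal fan: whereas in the convenient case the whole fan around $V$ qualifies, here the requirement of touching both coordinate planes through the $x$-axis together with the distance-$1$ vertex singles out one extreme ray of the fan, leaving no freedom. That this extremal face minimizes $a_1$ among the faces non-exceptional with respect to the $x$-axis (Part~3) I would establish by comparing its supporting plane with any competitor: a non-exceptional face with strictly larger $x$-intercept would have to detach from one of the two coordinate planes near the axis, and the faces able to do that while pushing the intercept further out are exactly the exceptional pyramids excluded from the competition.

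The remaining point is the ``moreover'' of Part~1, that a proximate face is non-exceptional with respect to \emph{every} axis, not only the one for which it is proximate. Here I would observe that a face exceptional with respect to a transverse axis is a pyramid whose apex is a distance-$1$ vertex off that transverse axis, and then verify, by a short case analysis on the position of this apex relative to the planes $\{y=0\}$ and $\{z=0\}$, that such a shape cannot coexist with the proximity data (touching both coordinate planes through the $x$-axis, plus the on-axis/distance-$1$ vertex) unless the whole Newton boundary consists of exceptional faces, which the hypothesis forbids. I expect this last step, together with the uniqueness claim in the non-convenient case, to be the main obstacle: both require translating the purely combinatorial description ``exceptional $=$ pyramid with apex at distance $1$'' into the metric statement about $x$-intercepts and carrying out the bookkeeping of which vertices lie on which coordinate planes.
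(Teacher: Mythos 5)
The paper itself offers no written proof of this proposition --- it points to Lemma~3.1 and Theorem~3.8 of \cite{Ole13} and Lemma~6 of \cite{BKO20} --- so your attempt can only be measured against the statement. Your normalization $m(S)_{x}=1/a_1$ is correct and is the right quantity to track, but your treatment of the \emph{convenient} case contains a genuine error. You claim that the maximal $x$-intercept is $p_0$, attained exactly on the fan of faces through the on-axis vertex $V=(p_0,0,0)$, that the proximate faces are precisely the non-exceptional members of this fan, and that the hypothesis $\Gamma^{2}(f)\setminus E(f)\neq\emptyset$ forces at least one face of the fan to be non-exceptional with respect to the $x$-axis. All three assertions fail. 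Take $f=x^{10}+x^{3}y+x^{3}z+xy^{4}+xz^{4}+y^{8}+z^{8}$. Its compact boundary has exactly three $2$-faces: the triangle $T$ with vertices $(10,0,0)$, $(3,1,0)$, $(3,0,1)$ on the plane $x+7y+7z=10$ (the only face containing $V$), the quadrilateral $S'$ with vertices $(3,1,0)$, $(3,0,1)$, $(1,4,0)$, $(1,0,4)$ on $3x+2y+2z=11$, and the quadrilateral on $4x+y+z=8$. Since $\partial f_{T}/\partial y=x^{3}$, the face $T$ is exceptional with respect to the $x$-axis, so the entire fan around $V$ is $x$-exceptional even though $\Gamma^{2}(f)\setminus E(f)\neq\emptyset$ (indeed $S'$ is non-exceptional with respect to every axis). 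The $x$-proximate face is $S'$: it touches both coordinate planes through the $x$-axis and has the distance-$1$ vertices $(3,1,0)$ and $(3,0,1)$, yet it does not contain $V$ and its $x$-intercept is $11/3$, not $p_0=10$. The proposition still holds here ($11/3>2$), but your argument does not establish Parts~1 and~3 in the convenient case: the maximum in Part~3 ranges only over $x$-non-exceptional faces and need not equal $p_0$, and existence of a proximate face cannot be read off from the fan at $V$.

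A correct argument has to compare an arbitrary $x$-non-exceptional face directly with a candidate proximate face, for instance by showing that any supporting plane whose $x$-intercept exceeds that of the proximate face can only cut out a pyramid with apex at lattice distance $1$ from the axis, i.e.\ an $x$-exceptional face; this works uniformly in the convenient and non-convenient cases and is essentially what the cited Lemma~6 of \cite{BKO20} provides. Beyond this, the two points you yourself flag as the ``main obstacle'' --- uniqueness of the proximate face in the non-convenient case and the ``moreover'' of Part~1 (non-exceptionality with respect to \emph{all} axes) --- are only sketched, so even apart from the convenient-case error the proposal is not a complete proof.
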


The assumption $\Gamma^{2}(f)\setminus E(f)\neq\emptyset$ in item 1.\ of the above
proposition is essential.

\begin{example}
The surface singularity $f(x,y,z):=xz+yz+y^{3}\in\mathcal{O}^{3}$ has a unique
$2$-dimensional face. The face is exceptional, $x$-proximate, $y$-proximate and
not $z$-proximate.
\end{example}

The formula we will use in the proof of our main theorem is a stronger version
of Theorem \ref{bko}.

\begin{theorem}[{\cite[Cor.\ 2]{BKO20}}]
\label{thpro} If $f:(\mathbb{C}^{3},0)\rightarrow
(\mathbb{C},0)$ is a non-degenerate isolated surface singularity possessing
non-exceptional faces, i.e., $\Gamma^{2}(f)\setminus E(f)\neq\emptyset$, and
$S_{x_{i}}$ is any proximate face for the $x_i$-axis $(i=1,2,3)$, then%
\begin{equation}
\mathcal{L}_{0}(f)=\max_{i}\, m(S_{x_{i}})_{x_{i}}-1.
\end{equation}

\end{theorem}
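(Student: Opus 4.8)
The plan is to derive this refinement directly from Theorem~\ref{bko} by showing that the two maxima agree. Recall that by definition $m(S)=\max_{i}m(S)_{x_{i}}$, so since all the maxima involved range over finite sets we may freely interchange them:
\[
\max_{S\in\Gamma^{2}(f)\setminus E(f)}m(S)=\max_{S\in\Gamma^{2}(f)\setminus E(f)}\max_{i}\,m(S)_{x_{i}}=\max_{i}\,\max_{S\in\Gamma^{2}(f)\setminus E(f)}m(S)_{x_{i}}.
\]
Thus, in view of formula~\eqref{wzor_bko}, it suffices to prove that for each axis
\[
\max_{S\in\Gamma^{2}(f)\setminus E(f)}m(S)_{x_{i}}=m(S_{x_{i}})_{x_{i}},
\]
where $S_{x_{i}}$ is any proximate face for the $x_{i}$-axis; subtracting $1$ then yields the claim.

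The heart of the matter is this per-axis equality, and the only delicate point is to keep track of the difference between being non-exceptional ($S\notin E(f)$) and being non-exceptional merely with respect to the single axis $x_{i}$ ($S\notin E_{x_{i}}(f)$). I would prove it by two inequalities. For ``$\le$'', note that $\Gamma^{2}(f)\setminus E(f)\subseteq\Gamma^{2}(f)\setminus E_{x_{i}}(f)$, so the left-hand maximum does not exceed the maximum of $m(S)_{x_{i}}$ taken over \emph{all} $x_{i}$-non-exceptional faces; by Proposition~\ref{prox}(3) the latter maximum equals $m(S_{x_{i}})_{x_{i}}$, the $x_{i}$-intercept of the supporting plane of a proximate face. (This also shows $m(S_{x_{i}})_{x_{i}}$ is independent of the chosen proximate face, so the right-hand side of the theorem is well defined even when proximate faces fail to be unique.) For ``$\ge$'', I invoke Proposition~\ref{prox}(1): since $\Gamma^{2}(f)\setminus E(f)\neq\emptyset$, a proximate face $S_{x_{i}}$ exists and is in fact \emph{non-exceptional}, i.e.\ $S_{x_{i}}\in\Gamma^{2}(f)\setminus E(f)$; hence it competes in the left-hand maximum and gives $\max_{S\in\Gamma^{2}(f)\setminus E(f)}m(S)_{x_{i}}\ge m(S_{x_{i}})_{x_{i}}$. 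The two bounds together give the equality.

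Combining the per-axis equalities with the interchange of maxima from the first paragraph yields
\[
\max_{S\in\Gamma^{2}(f)\setminus E(f)}m(S)=\max_{i}\,m(S_{x_{i}})_{x_{i}},
\]
and substituting into \eqref{wzor_bko} completes the proof. I expect the main obstacle to be purely bookkeeping: verifying that the proximate face supplied by Proposition~\ref{prox}(1) genuinely belongs to the smaller family $\Gamma^{2}(f)\setminus E(f)$ rather than only to $\Gamma^{2}(f)\setminus E_{x_{i}}(f)$. This is precisely the content of the ``moreover'' clause of that proposition, and it is what legitimizes the lower bound and hence the whole reduction.
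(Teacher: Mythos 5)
Your derivation is correct and is exactly the route the paper intends: the paper states this theorem only as a citation to \cite[Cor.\ 2]{BKO20}, but it sets up Theorem \ref{bko} and Proposition \ref{prox} precisely so that the result follows by interchanging the two maxima and applying items 1 and 3 of Proposition \ref{prox} axis by axis, as you do. Your attention to the distinction between $E(f)$ and $E_{x_i}(f)$ (the ``moreover'' clause legitimizing the lower bound, item 3 giving the upper bound and the well-definedness of $m(S_{x_i})_{x_i}$) is exactly the right bookkeeping.
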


\section{The main Result}

First, we recall some notions. Let $f_{0}\in\mathcal{O}_{n}$ be an isolated
singularity.
If $f_{0}$ is convenient, then $\nu(f_{0})$ is the \textit{Newton
number of $f_{0}$} (\cite{Kou76}). A \textit{deformation of $f_{0}$} is a holomorphic
function germ $f(t,z):(\mathbb{C\times C}^{n},0)\rightarrow(\mathbb{C},0)$ such
that $f(0,z)=f_{0}(z)$ and $f(t,0)=0.$ Each deformation $f(t,z)$ will also be
treated as a family $(f_{t})$ of germs at $0\in\mathbb{C}^{n}$ by putting
$f_{t}(z):=f(t,z)$. Since $f_{0}$ has an isolated critical point at $0$,
$f_{t}$ also has isolated critical points near the origin for sufficiently
small $t$ (\cite{GLS07}, Thm.\ 2.6 in Ch.\ I). In particular, $f_{t}$ has an
isolated critical point or a regular point $(\nabla f_{t}(0)\neq0)$ at $0$.
A deformation $(f_{t})$ of $f_{0}$ is \textit{non-degenerate} if $f_{t}$ are non-degenerate
for all small $t$. In particular, $f_{0}$ is a non-degenerate isolated singularity.
The main result of the paper is

\begin{theorem}
\label{lconstant} Let $f_{0}:(\mathbb{C}^{3},0)\rightarrow(\mathbb{C},0)$ be
a non-degenerate isolated surface singularity and let $(f_{t})$ be its
non-degenerate $\mu$-constant deformation. Then $\mathcal{L}_{0}%
(f_{t})=\mathcal{L}_{0}(f_{0})$ for small $t.$
\end{theorem}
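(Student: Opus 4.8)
The plan is to compute $\mathcal{L}_0(f_t)$ and $\mathcal{L}_0(f_0)$ through the Newton-polyhedral formula of Theorem \ref{thpro} and to show that the data entering that formula is unchanged by the deformation. For small $t\neq 0$ the coefficient of each monomial of $f_0$ stays nonzero, so $\supp f_0\subseteq\supp f_t$ and hence $\Gamma_+(f_0)\subseteq\Gamma_+(f_t)$; in particular the Newton boundary of $f_t$ lies weakly closer to the origin than that of $f_0$. For each axis $x_i$ write $a_i(g):=m(S_{x_i})_{x_i}$ for a proximate face $S_{x_i}$ of $g$; by Proposition \ref{prox}(3) this is the largest $x_i$-intercept of the supporting plane of an $x_i$-non-exceptional $2$-face, and by Theorem \ref{thpro} one has $\mathcal{L}_0(g)=\max_i a_i(g)-1$ whenever $\Gamma^2(g)\setminus E(g)\neq\emptyset$. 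Thus the theorem reduces to the invariance $a_i(f_t)=a_i(f_0)$ for every $i$, together with a check that $f_t$ cannot pass between the regimes $\Gamma^2\setminus E\neq\emptyset$ and $\Gamma^2\setminus E=\emptyset$ (the latter handled by the formula $\mathcal{L}_0=k-1$ of the Remark following Theorem \ref{bko}).

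I would first dispose of the \emph{convenient} case. Here Kouchnirenko's theorem gives $\mu(g)=\nu(g)$ for the non-degenerate germs in the family, and the Newton number of a convenient diagram is strictly monotone under enlargement of $\Gamma_+$; since $\Gamma_+(f_0)\subseteq\Gamma_+(f_t)$ and $\nu(f_t)=\mu(f_t)=\mu(f_0)=\nu(f_0)$, this forces $\Gamma_+(f_t)=\Gamma_+(f_0)$. It then remains to see that the combinatorial data of Theorem \ref{thpro} depend only on $\Gamma_+$. The shape of an exceptional face is that of a thin pyramid whose apex sits at distance $1$ from its axis (Figure \ref{excfig}), so every lattice point of such a face other than the apex has vanishing transverse coordinate; consequently any monomial added on the face by the deformation cannot disturb the pure-power partial derivative witnessing exceptionality, and, as the support only grows, no previously non-exceptional face can acquire such a pure power. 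Hence the partition of $\Gamma^2$ into exceptional and non-exceptional faces, and all the intercepts $m(S)_{x_i}$, are preserved, giving $a_i(f_t)=a_i(f_0)$ and $\mathcal{L}_0(f_t)=\mathcal{L}_0(f_0)$.

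The substance of the proof lies in the \emph{non-convenient} case, where $\mu$-constancy no longer forces $\Gamma_+(f_t)=\Gamma_+(f_0)$: the deformation may genuinely reshape the polyhedron in the non-convenient directions while keeping the Newton number fixed. Here I would invoke the characterization of $\mu$-constant non-degenerate deformations of surface singularities from \cite{BKW19} (equivalently \cite[Thm.\ 2.15]{LMS20}) to list exactly which modifications of $\Gamma(f_t)$ are admissible, and then verify that each admissible modification leaves $\max_i a_i$ unchanged. Since Teissier's lower-semicontinuity already yields $\mathcal{L}_0(f_t)\geq\mathcal{L}_0(f_0)$ under $\mu$-constancy, it is enough to establish the single inequality $a_i(f_t)\leq a_i(f_0)$ for each $i$, i.e.\ that the deformation creates no $x_i$-non-exceptional $2$-face whose supporting plane meets the $x_i$-axis farther out than the proximate face of $f_0$ does. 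I would organize this along the classification of proximate faces recorded in Proposition \ref{prox} and Figure \ref{prf}, showing that the admissible moves are confined to faces that are either exceptional or have strictly smaller $x_i$-intercept, so that the proximate faces and their axis-intercepts survive intact.

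The main obstacle is precisely this last step: converting the combinatorial description of \cite{BKW19} into face-by-face control of the proximate faces and their $x_i$-intercepts in the non-convenient situation, and ruling out the appearance of a non-exceptional face with a larger intercept --- the very phenomenon responsible for the upward jump of $\mathcal{L}_0$ in degenerate families such as the one in Example 2.
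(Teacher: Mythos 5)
There is a genuine gap, and it sits exactly where you declare the problem to be easy. Your treatment of the convenient case rests on the claim that the Newton number is \emph{strictly} monotone under enlargement of a convenient Newton polyhedron, so that $\nu(f_t)=\nu(f_0)$ forces $\Gamma_+(f_t)=\Gamma_+(f_0)$. That is false: $\nu$ is only weakly monotone, and the equality case is nontrivial even for convenient diagrams. Already in two variables, $g_0=y^2+x^2y+x^5$ and $g_1=y^2+x^2y+x^4+x^5$ are convenient and non-degenerate with $\Gamma_+(g_0)\subsetneq\Gamma_+(g_1)$, yet $\nu(g_0)=\nu(g_1)=\mu(g_0)=\mu(g_1)=3$; the region swallowed is a triangle with base on the $x$-axis and apex $(2,1)$ at height $1$. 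The three-dimensional analogue of this phenomenon is precisely what \cite[Thm.\ 1]{BKW19} classifies (the difference of the polyhedra is a pyramid with apex at lattice distance $1$ from a coordinate plane), and handling it is the entire content of the paper's proof: after reducing to convenient germs (by adding $z_i^k$ with $k>\mu$ and invoking P\l oski's theorem) and to one-monomial deformations (by \emph{weak} monotonicity of $\nu$), the paper analyzes face by face how that pyramid replaces $2$-faces of $\Gamma(f_0)$ and checks that the intercepts of the proximate faces survive. You acknowledge such a verification only in the non-convenient case, call it ``the main obstacle,'' and do not carry it out; so the core of the argument is missing, and in the convenient case it is missing because of an incorrect monotonicity claim rather than by omission.

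Moreover, the lemma you propose to prove --- that every admissible modification leaves all the axis intercepts $a_i$ unchanged --- cannot be established in the generality you need. When the apex $D$ of the pyramid lies at lattice distance $1$ from two coordinate axes (i.e., $D=(1,0,1)$ or $(0,1,1)$ after the paper's normalization), a proximate face for the $z$-axis is among the faces being created or destroyed and its intercept is not controlled combinatorially. The paper escapes by noting that then $\operatorname{ord}f_0=2$ and switching to a completely different argument: P\l oski's rank criterion (rank of the Hessian $\geq n-1$ iff $\mathcal{L}_0=\mu$), the splitting lemma, and reduction to the known two-variable case (Proposition \ref{vertex}); the same device powers the case $\Gamma^{2}(f_0)\setminus E(f_0)=\emptyset$. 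No ingredient of this kind appears in your plan, so the case analysis you outline cannot close as stated. Finally, before using Teissier's semicontinuity to discard one inequality, verify its direction carefully; but even granting it, what remains is the whole theorem, not a residual check.
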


To prove this theorem, we give some auxiliary facts. The first one is a result by
A.\ P\l oski \cite[Cor.\ 1.4]{Plo84}.

\begin{proposition}\label{rzad}
Let $f:(\mathbb{C}^{n},0)\rightarrow(\mathbb{C},0)$ be an
isolated singularity. Then
\[
\operatorname{rank}\left[  \frac{\partial^{2}f}{\partial z_{i}\partial z_{j}%
}(0)\right]  \geq n-1
\]
if and only if $\mu(f)=\mathcal{L}_{0}(f).$
\end{proposition}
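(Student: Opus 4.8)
The plan is to normalize $f$ by the Splitting Lemma and then treat the two implications separately. Both sides of the claimed equivalence are invariant under right-equivalence $f\mapsto f\circ\phi$ (with $\phi$ a biholomorphism fixing $0$): the Milnor number $\mu$ and the \L ojasiewicz exponent $\mathcal{L}_{0}$ are analytic invariants, while the Hessian transforms by congruence $H\mapsto (D\phi(0))^{T}H\,D\phi(0)$, so $\rank H$ is unchanged. First I would diagonalize $H$ by a linear change of coordinates and apply the Splitting Lemma to write, up to right-equivalence, $f=z_{1}^{2}+\dots+z_{r}^{2}+\tilde g(z_{r+1},\dots,z_{n})$, where $r=\rank H$, the residual part $\tilde g$ lies in $\mathfrak m^{3}$, and $\tilde g$ has an isolated singularity in $m:=n-r$ variables. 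Since $\nabla f$ splits as $(2z_{1},\dots,2z_{r},\nabla\tilde g)$ with the first block a linear isomorphism, a short computation with the path definition of $\mathcal{L}_{0}$ gives $\mu(f)=\mu(\tilde g)$ and $\mathcal{L}_{0}(f)=\max(1,\mathcal{L}_{0}(\tilde g))=\mathcal{L}_{0}(\tilde g)$, the last equality because $\operatorname{ord}\nabla\tilde g\ge 2$ forces $\mathcal{L}_{0}(\tilde g)\ge 2>1$. Under this reduction the hypothesis $\rank H\ge n-1$ becomes simply $m\le 1$, so the proposition reduces to the equivalence $m\le 1 \iff \mu(\tilde g)=\mathcal{L}_{0}(\tilde g)$.

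For $m\le 1\Rightarrow$ equality I would use the explicit normal forms. If $m=0$ then $f$ is a Morse singularity and $\mu=\mathcal{L}_{0}=1$. If $m=1$ then $\tilde g$ is right-equivalent to $z_{n}^{k}$ with $k\ge 3$, i.e.\ $f$ is the $A_{k-1}$ singularity $z_{1}^{2}+\dots+z_{n-1}^{2}+z_{n}^{k}$; here $\mu=k-1$, and since the extremal path for $\nabla f$ is the $z_{n}$-axis one gets $\mathcal{L}_{0}=k-1$ as well. Hence equality holds whenever $m\le 1$.

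For the converse I would prove the contrapositive: if $m\ge 2$ then $\mu(\tilde g)>\mathcal{L}_{0}(\tilde g)$. Put $m_{i}:=\operatorname{ord}(\partial\tilde g/\partial z_{i})$. As $\tilde g$ is an isolated singularity in $m\ge 2$ variables, no variable is absent, so each $m_{i}$ is finite, and since $\operatorname{ord}\tilde g\ge 3$ we have $m_{i}\ge \operatorname{ord}\tilde g-1\ge 2$ for every $i$. The key input is the sharp estimate for the \L ojasiewicz exponent of a finite map germ, namely
\[
\mathcal{L}_{0}(\tilde g)\le \mu(\tilde g)-\prod_{i} m_{i}+\max_{i} m_{i}.
\]
Because $m\ge 2$ and every $m_{i}\ge 2$, the product of the factors other than the maximal one is at least $2$, so $\prod_{i} m_{i}>\max_{i} m_{i}$; substituting gives $\mathcal{L}_{0}(\tilde g)<\mu(\tilde g)$, as required.

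The elementary ingredients above — the splitting reduction, the right-equivalence invariance, and the $A_{k-1}$ computation — are routine. The main obstacle is the upper bound on $\mathcal{L}_{0}$ of a finite map displayed in the third paragraph: its precise correction term $-\prod_{i}m_{i}+\max_{i}m_{i}$ is exactly what makes the forward direction tight and the strict inequality work, and establishing it (rather than the crude $\mathcal{L}_{0}\le\mu$, which would not separate the equality from the strict case) is the genuinely hard analytic step on which the whole argument rests.
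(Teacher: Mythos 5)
The paper never proves this proposition at all: it is quoted as a known result of A.~P{\l}oski, with only the citation \cite[Cor.\ 1.4]{Plo84}, so there is no internal argument to compare yours against line by line. Judged on its own terms, your reduction is sound. The invariance under right-equivalence of $\mu$, of $\mathcal{L}_0$, and of the rank of the Hessian (by congruence --- legitimate because $0$ is a critical point, so the second-derivative transformation terms involving $\nabla f(0)$ vanish), the splitting $f\sim z_1^2+\cdots+z_r^2+\tilde g$ with $\tilde g\in\mathfrak{m}^3$, the identities $\mu(f)=\mu(\tilde g)$ and $\mathcal{L}_0(f)=\max(1,\mathcal{L}_0(\tilde g))$ (both checkable via the path characterization of $\mathcal{L}_0$ recalled in the Introduction), and the one-variable normal form $z_n^k$ settling the direction $\operatorname{rank}\ge n-1\Rightarrow\mu(f)=\mathcal{L}_0(f)$ are all correct, as is the arithmetic of the contrapositive: with $m\ge 2$ finite orders $m_i=\operatorname{ord}(\partial\tilde g/\partial z_i)\ge 2$ one gets $\prod_i m_i\ge 2\max_i m_i$, so your displayed estimate forces $\mathcal{L}_0(\tilde g)<\mu(\tilde g)$.

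Everything therefore hinges, as you say yourself, on the inequality $\mathcal{L}_0(\tilde g)\le\mu(\tilde g)-\prod_i m_i+\max_i m_i$, which you invoke without proof. That inequality is true, but it is not a lemma one dispatches in passing: for a finite map germ $F=(f_1,\ldots,f_n)\colon(\mathbb{C}^n,0)\to(\mathbb{C}^n,0)$, the bound $\mathcal{L}_0(F)\le m_0(F)-\prod_i\operatorname{ord}f_i+\max_i\operatorname{ord}f_i$ (where $m_0(F)$ is the multiplicity) is a theorem of P{\l}oski from the very series of papers cited here, and applying it to $F=\nabla\tilde g$ gives exactly your estimate; the corollary the paper quotes is obtained from it in essentially the way you describe. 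So your proposal is best read as a faithful reconstruction of the standard derivation, correct modulo citing P{\l}oski's inequality rather than proving it. If it is meant to be self-contained, that unproven inequality is the one genuine gap --- and it carries all the content, since, as you correctly note, the cruder bound $\mathcal{L}_0\le\mu$ (itself not obvious) could never separate equality from strict inequality.
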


The second fact is the following observation.
\begin{proposition}\label{vertex}
  Let $f_0 : (\mathbb{C}^3, 0) \rightarrow (\mathbb{C}, 0)$ be an isolated
  surface singularity of order $2$. If $(f_t)$ is a $\mu$-constant deformation
  of $f_0$, then $\mathcal{L}_0 (f_t) =\mathcal{L}_0 (f_0)$ for small $t$.
\end{proposition}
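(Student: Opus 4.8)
The plan is to exploit Proposition~\ref{rzad}, which characterizes the equality $\mu(f)=\mathcal{L}_0(f)$ by a rank condition on the Hessian. First I would observe that since $f_0$ has order $2$, its lowest-degree part is a nonzero quadratic form $Q$, and the Hessian $[\partial^2 f_0/\partial z_i\partial z_j(0)]$ is (up to the constant $2$) the symmetric matrix of $Q$. Because $f_0$ is an \emph{isolated} singularity of order $2$, I would argue that $Q$ cannot be too degenerate: if the Hessian had rank $\le 1$, the quadratic part would be (a scalar multiple of) a single squared linear form, and after a linear change of coordinates $f_0$ would essentially depend on only one variable to second order, which is incompatible with having an isolated critical point in three variables under the non-degeneracy/order hypotheses. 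So the Hessian of $f_0$ at $0$ has rank $\ge 2 = n-1$, and Proposition~\ref{rzad} gives $\mathcal{L}_0(f_0)=\mu(f_0)$.

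Next I would transport this rank information to $f_t$ for small $t$. The key point is that the Hessian rank is lower semicontinuous: the $(i,j)$-entries $\partial^2 f_t/\partial z_i\partial z_j(0)$ depend holomorphically (hence continuously) on $t$, so for small $t$ the rank of the Hessian of $f_t$ at $0$ is at least the rank at $t=0$, which is $\ge n-1$. Applying Proposition~\ref{rzad} again, this time to $f_t$, yields $\mathcal{L}_0(f_t)=\mu(f_t)$ for all sufficiently small $t$.

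Finally I would combine these two equalities with the $\mu$-constancy hypothesis. Since the deformation is $\mu$-constant, $\mu(f_t)=\mu(f_0)$ for small $t$. Chaining the identities gives
\[
\mathcal{L}_0(f_t)=\mu(f_t)=\mu(f_0)=\mathcal{L}_0(f_0),
\]
which is exactly the claim.

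I expect the main obstacle to be the first step: rigorously ruling out $\operatorname{rank}\le 1$ for the Hessian of $f_0$ at the origin. One must show that a second-order term consisting of (at most) one squared linear form forces either a non-isolated singularity or a contradiction with order $2$. I would handle this by a linear change of coordinates diagonalizing $Q$; if only one diagonal entry survives, say $Q=z_1^2$, then $\partial f_0/\partial z_2$ and $\partial f_0/\partial z_3$ vanish to order $\ge 2$, and I would derive that the critical locus is positive-dimensional near $0$, contradicting isolatedness. The remaining steps (semicontinuity of rank and the arithmetic with $\mu$) are routine. One subtlety worth noting is that the statement does not require the deformation to be non-degenerate, so the argument must rest only on the $\mu$-constancy and the order-$2$ hypothesis, as above.
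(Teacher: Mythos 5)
There is a genuine gap in the first step, and it cannot be repaired along the lines you suggest. Your claim that an isolated surface singularity of order $2$ must have Hessian of rank $\geq 2$ at the origin is false: take $f_0 = x^2 + y^3 + z^3$. This has order $2$, its critical locus $\{2x = 3y^2 = 3z^2 = 0\}$ is just the origin, yet the Hessian at $0$ has rank $1$. Your proposed argument for ruling this out --- that if $Q = z_1^2$ then $\partial f_0/\partial z_2$ and $\partial f_0/\partial z_3$ vanish to order $\geq 2$ and hence the critical locus is positive-dimensional --- does not work: partial derivatives of order $\geq 2$ can still cut out an isolated point, as the example shows. So the rank-$1$ case is a real case that your proof does not cover, and it is exactly the hard half of the proposition; Proposition~\ref{rzad} is simply unavailable there, since for such $f_0$ one has $\mathcal{L}_0(f_0) \neq \mu(f_0)$ in general.

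The part of your argument for $\operatorname{rank}\tmmathbf{H}(f_0) \geq 2$ is correct and coincides with the paper's first case: lower semicontinuity of the rank gives $\operatorname{rank}\tmmathbf{H}(f_t) \geq 2$ for small $t$, and Proposition~\ref{rzad} plus $\mu$-constancy yields $\mathcal{L}_0(f_t) = \mu(f_t) = \mu(f_0) = \mathcal{L}_0(f_0)$. For the missing rank-$1$ case the paper proceeds quite differently: by the splitting lemma one writes $f_0 = x^2 + g_0(y,z)$ with $\operatorname{ord} g_0 \geq 3$, then applies the splitting procedure in the variable $x$ uniformly in $t$ to bring the deformation to the form $f_t = x^2 + g_0(y,z) + t\tilde{h}_t(y,z)$. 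Since both $\mu$ and $\mathcal{L}_0$ are invariants of stable equivalence, one may drop the $x^2$ term and reduce to a $\mu$-constant deformation of the plane curve singularity $g_0$; there the constancy of $\mathcal{L}_0$ is known (a $\mu$-constant family of plane curve singularities is topologically trivial by L\^e--Ramanujam, and $\mathcal{L}_0$ is a topological invariant in two variables). You would need to add an argument of this kind to complete the proof.
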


\begin{proof}
  Let $\tmmathbf{H} (f_t)$ denote the Hessian matrix of $f_t$ at $0$.
  By the assumption, we have $\tmop{rank}\tmmathbf{H} (f_0) > 0$.
  If $\tmop{rank}
  \tmmathbf{H} (f_0) \geqslant 2$, then $\tmop{rank} \tmmathbf{H} (f_t)
  \geqslant 2$ for small $t$ so by Proposition \ref{rzad} we get $\mathcal{L}_0 (f_t) =
  \mu (f_t) = \mu (f_0) =\mathcal{L}_0 (f_0)$. Let, now, $\tmop{rank}
  \tmmathbf{H} (f_0) = 1$, and let $x$, $y$, $z$ denote the variables in
  $\mathbb{C}^3$. Using splitting lemma (see \cite[Thm.\ 2.47]{GLS07}), after a
  holomorphic change of coordinates,
  we may assume that $f_0 = x^2 + g_0 (y, z)$, where $\tmop{ord} g_0 \geqslant
  3$. Hence, $f_t = f_0 + th_t (x, y, z)$, where $\tmop{ord} h_t \geqslant 2$.
  Applying the procedure from the splitting lemma to the variable $x$, we easily
  find that there exists a holomorphic change of coordinates of the form
  $x \mapsto \Phi (t, x, y, z)$, $y \mapsto y$, $z \mapsto z$, $t \mapsto t$, where $\tmop{ord} \Phi (0, x, 0, 0) = 1$,
  bringing $f_t$ into the form
  $f_t = f_0 + t \tilde{h}_t (y, z)$, where
  $\tmop{ord} \tilde{h}_t \geqslant 2$ for small $t$. Since both $\mu$ and
  $\mathcal{L}_0$ are invariants of the stable equivalence (see, e.g.,\ \cite[Thm.\ 21]{Oka18}), we may remove
  $x^2$ from $f_t$ and infer that $(g_t) \assign (g_0 + t \tilde{h}_t (y, z))$ is a
  $\mu$-constant deformation of the isolated plane curve singularity $g_0$ and
  $\mathcal{L}_0 (g_t) =\mathcal{L}_0 (f_t)$. As explained in the
  Introduction, this implies that $\mathcal{L}_0 (g_t) =\mathcal{L}_0 (g_0)$
  for small $t$, hence also $\mathcal{L}_0 (f_t) =\mathcal{L}_0 (f_0)$.
\end{proof}

As a corollary, we prove a particular case of the main theorem (in fact, an even stronger
assertion since we do not assume non-degeneracy of $f_{t}$) when the set of
$2$-dimensional faces $\Gamma^{2}(f_{0})$ is empty or consists of exceptional faces only.
\begin{corollary}
\label{except} Let $f_{0}:(\mathbb{C}^{3},0)\rightarrow(\mathbb{C},0)$ be an
isolated surface singularity such that $\Gamma^{2}(f_{0})\setminus
E(f_{0})=\emptyset.$ If $(f_{t})$ is a $\mu$-constant deformation of $f_{0},$
then $\mathcal{L}_{0}(f_{t})=\mathcal{L}_{0}(f_{0})$ for small $t.$
\end{corollary}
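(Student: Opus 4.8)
The plan is to reduce the statement to Proposition \ref{vertex}, which already establishes the claim for every $\mu$-constant deformation of an isolated surface singularity of order $2$, and which assumes nothing about non-degeneracy. It therefore suffices to show that the hypothesis $\Gamma^{2}(f_{0})\setminus E(f_{0})=\emptyset$ forces $\operatorname{ord}f_{0}=2$.

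To see this, I would invoke the structural description recorded in the remark following Theorem \ref{bko} (that is, \cite[Prop.\ 3.4, Thm.\ 3.8]{Ole13}): when $\Gamma^{2}(f_{0})\setminus E(f_{0})=\emptyset$, the Newton boundary of $f_{0}$ contains, after a permutation of the variables, the segment $S\in\Gamma^{1}(f_{0})$ joining the monomials $xy$ and $z^{k}$, $k\geq 2$, with $xy$ a vertex of $\Gamma_{+}(f_{0})$. In particular $xy\in\supp f_{0}$, so $\operatorname{ord}f_{0}\leq 2$; and since $f_{0}$ is an isolated singularity, $f_{0}(0)=0$ and $\nabla f_{0}(0)=0$ exclude all terms of degree $\leq 1$, giving $\operatorname{ord}f_{0}\geq 2$. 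Hence $\operatorname{ord}f_{0}=2$, and Proposition \ref{vertex} applied to $(f_{t})$ yields $\mathcal{L}_{0}(f_{t})=\mathcal{L}_{0}(f_{0})$ for small $t$. I would also note that, because $xy$ is a \emph{vertex}, at most one of $x^{2},y^{2}$ can occur in $\supp f_{0}$, so the quadratic part of $f_{0}$ already has a nonvanishing $2\times 2$ Hessian minor; thus $\rank\tmmathbf{H}(f_{0})\geq 2$, one is in the easy branch of the proof of Proposition \ref{vertex}, and Proposition \ref{rzad} together with lower semicontinuity of the rank gives $\mathcal{L}_{0}(f_{t})=\mu(f_{t})=\mu(f_{0})=\mathcal{L}_{0}(f_{0})$ outright.

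The only delicate point is this reduction to order $2$, namely confirming that the structural description is legitimately available here. The cited result reads the $xy$--$z^{k}$ configuration off the combinatorics of $\Gamma_{+}(f_{0})$ together with the isolatedness of $f_{0}$; what one must check is that this conclusion about the shape of the Newton polyhedron uses only that $f_{0}$ is an isolated singularity with $\Gamma^{2}(f_{0})\setminus E(f_{0})=\emptyset$, and not any non-degeneracy of $f_{0}$ or of $(f_{t})$. Granting that, the corollary follows immediately and, as announced, applies even to degenerate deformations.
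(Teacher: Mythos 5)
Your proof is correct and follows essentially the same route as the paper: both arguments use the structural result from \cite{Ole13} to conclude that $xy$ is a vertex of $\Gamma_{+}(f_{0})$, hence $\operatorname{ord}f_{0}=2$, and then invoke Proposition \ref{vertex}. Your additional observation that the vertex property forces $\rank\tmmathbf{H}(f_{0})\geq 2$ (so one lands in the easy branch of that proposition) is a correct refinement, and your caveat about the cited structural result not requiring non-degeneracy matches the paper's own reliance on it.
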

\begin{proof}
The assumption
$\Gamma^{2}(f_{0})\setminus E(f_{0})=\emptyset$ implies by Theorem 1.8 in
\cite{Ole13} that $\Gamma^{1}(f_{0})$ has the unique edge joining vertices
$z^{k}$ and $xy$, $k\geq2$ (up to permutation of variables $x,y,z$). Since
$xy$ is a vertex of $\Gamma_{+}(f_{0})$, Proposition \ref{vertex} delivers the assertion.
\end{proof}

Now we may prove the main theorem.

\begin{proof}
[Proof of the main theorem.]Let $f_{0}:(\mathbb{C}^{3},0)\rightarrow
(\mathbb{C},0)$ be a non-degenerate isolated singularity and let $(f_{t})$ be
its non-degenerate $\mu$-constant deformation. If $\Gamma^{2}(f)\setminus
E(f)=\emptyset$, then the assertion follows from Corollary \ref{except}. Assume
now $\Gamma^{2}(f)\setminus E(f)\neq\emptyset$. Then by Proposition
\ref{prox}, item 1 each coordinate axis has a proximate face and these faces are all
non-exceptional. Observe that it is enough to prove the theorem for
convenient $f_{0}$ and $f_{t}.$ In fact, if $f_{0}$ is non-convenient with
respect to the $x_{i}$-axis, $i\in\{1,2,3\},$ then we put $\widetilde{f_{0}%
}:=f_{0}+z_{i}^{k}$, $\widetilde{f_{t}}:=f_{t}+z_{i}^{k}$, where $k>\mu
(f_{0})=\mu(f_{t})$.
Possibly increasing $k$, we get that $\widetilde{f_{t}}$ are also non-degenerate (see for
instance \cite[Lemma 3.7]{BO16}), $\mu(\widetilde{f_{0}})=\mu(f_{0})=\mu
(f_{t})=\mu(\widetilde{f_{t}})$ and, by the P\l oski theorem \cite{Plo84},
$\mathcal{L}_{0}(f_{t})=\mathcal{L}_{0}(\widetilde{f_{t}})$. Hence, in the
sequel we may assume $f_{0}$ and $f_{t}$ are convenient. As $f_{t}$ are
non-degenerate, by the Kushnirenko theorem we have $\nu(f_{t})=\mu
(f_{t})=\operatorname*{const}$. By the monotonicity of the Newton number with
respect to Newton polyhedra (see e.g.\ \cite{Gwo08}, \cite[Cor.\ 2.3]{LMS20}), it is enough
to prove the theorem for monomial deformations, i.e., deformations of the form $f_{t}=f_{0}+tz^{\alpha},$
where $\alpha\notin\Gamma_{+}(f_{0})$ (the case $\alpha\in\Gamma_{+}(f_{0})$ follows from
Theorem \ref{bko}). We may also assume $\Gamma_{+}%
(f_{t})=\operatorname*{const}$ for sufficiently small $t\neq0$.

Denote the variables in $\mathbb{C}^{3}$ by $x,y,z.$ Then $f_{t}%
(x,y,z)=f_{0}(x,y,z)+tx^{\alpha_{1}}y^{\alpha_{2}}z^{\alpha_{3}},$
$(\alpha_{1},\alpha_{2},\alpha_{3})\notin\Gamma_{+}(f_{0})$. Since $\nu
(f_{t})$ is constant and $\Gamma_{+}(f_{0})\subset\Gamma_{+}%
(f_{t})$, using {\cite[Theorem 1]{BKW19}} we infer that the vertex $C:=x^{\alpha_{1}%
}y^{\alpha_{2}}z^{\alpha_{3}}$ of $\Gamma_{+}(f_{t})$ lies in one of the
coordinate planes $H$, say the $xy$-plane, and $P:=\overline{\Gamma_{+}(f_{t}%
)\setminus\Gamma_{+}(f_{0})}$ is a pyramid with the base $\overline{\Gamma
_{+}(f_{t})\setminus\Gamma_{+}(f_{0})}\cap H$ whose apex $D\in\Gamma
^{0}(f_{0})$ has its $z$-coordinate equal to one (see Figure \ref{pomoc}).
Since $\operatorname*{ord} f_0\geqslant 2$,
$D$ does not belong to the $z$-axis.%
\begin{figure}
\includegraphics[
width=4.2234in
]%
{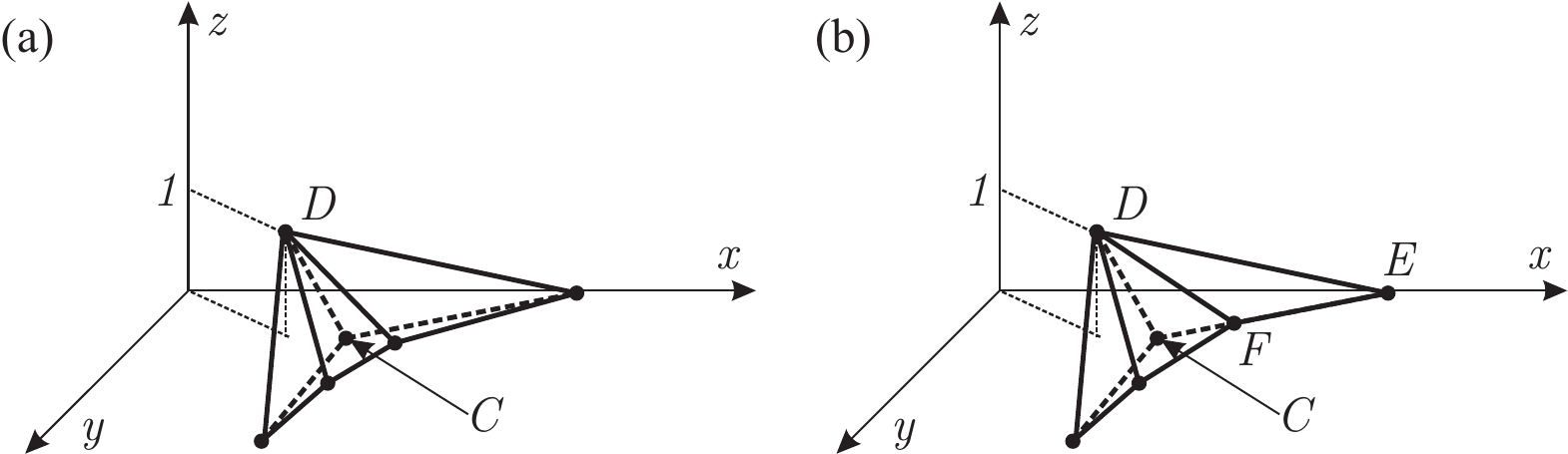}
\caption{The difference $\overline{\Gamma_+(f_t)\setminus\Gamma_+(f_0)}$}\label{pomoc}
\end{figure}

Observe that $\Gamma^{2}(f_{t}),$ $t\neq0,$ differ from $\Gamma^{2}(f_{0})$ on some
$2$-dimensional polytopes (precisely triangles).
Write $\Gamma^{2}(f_{t})=(\Gamma^{2}(f_0)\setminus
\{S_{1},\ldots,S_{l}\})\cup\{N_{1},\ldots,N_{k}\}$,
where $S_{1},\ldots,S_{l}$, $l\geq1$, are the sides that get
removed from $\Gamma^{2}(f_{0})$ and $N_{1},\ldots,N_{k}$, $k\geq1$, the new
ones showing in $\Gamma^{2}(f_{t})$. (As an illustration, in Figure \ref{pomoc}(a)
the three sides of $\Gamma^{2}(f_{0})$ get replaced by two new ones, marked with dashed contours,
taken from $\Gamma^{2}(f_{t})$;
in Figure \ref{pomoc}(b) the three sides of $\Gamma^{2}(f_{0})$ are also replaced by two new ones
but in this case one of them, namely $CDE$, is an extension of the one removed, i.e., $DEF$.
This also shows that $N_{i}$, $S_{j}$ need not constitute sides of $P$.)

Since $N_{i}$, $S_{j}$ are triangles with the
bases contained in the $xy$-plane and sharing the common vertex $D$ whose $z$-coordinate is equal to
$1$, we notice that if any of the faces $N_i$, $S_j$ is proximate for the $z$-axis, then 
either $D = (1,0,1)$ or $D = (0,1,1)$
(in the non-convenient case this is because any proximity face of an axis has a vertex lying at distance
$1$ from this axis).
But if this happens, we get the assertion by applying Proposition \ref{vertex}.

Thus, we may suppose none of the faces $N_i$, $S_j$ is proximate for the $z$-axis.
This also means that, in order to finish the proof, it is enough to show that 
the intersections of (the supporting planes of) the $x$- and $y$-proximate faces of $\Gamma_{+}(f_{t})$
with the corresponding axes are the same as those of $\Gamma_{+}(f_{0})$; indeed, the formula in Theorem
\ref{thpro} then implies that $\mathcal{L}_{0}(f_{t})=\mathcal{L}_{0}(f_{0})$ for small $t$.
To this end, we consider the following cases:

\begin{enumerate}[a)]
\item The vertex $D\not \in (xz\text{-plane}\cup yz\text{-plane})$. Then $C$ does not belong to the $x$-axis
and to the $y$-axis. \nopagebreak

\begin{figure}[!h]
\includegraphics[%
width=4.2234in
]%
{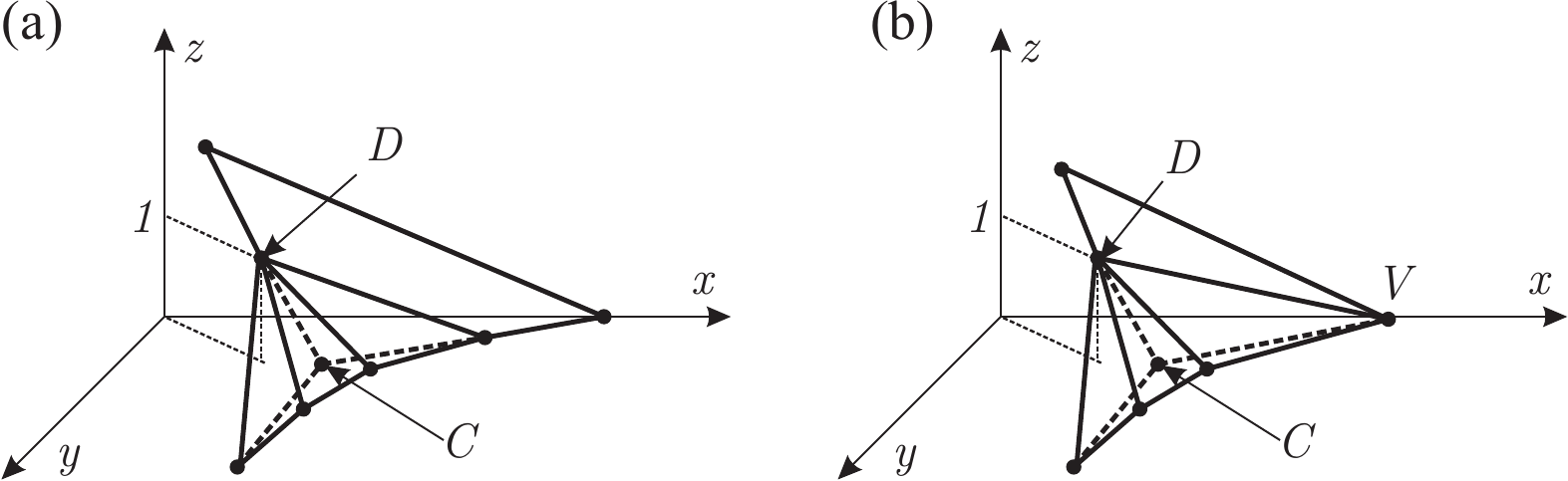}%
\caption{}\label{polozenie}
\end{figure}

If none of the faces $N_{1},\ldots,N_{k}$, $S_{1},\ldots,S_{l}$ touches the $x$- or $y$-axis,
then none of these faces touches the $xz$- or $yz$-plane, see Figure \ref{polozenie}(a). Hence none of these
faces is proximate for the $x$- or $y$-axis. Then the Newton polyhedron
$\Gamma_{+}(f_{t})$ of $f_{t}$, for $t\neq0$, has the same proximity faces, for all three axes, as
those of $\Gamma_{+}(f_{0})$. Consequently, also the intersections of (the supporting planes of) 
the proximity faces of $\Gamma_{+}(f_{t})$
with the axes are the same as those of $\Gamma_{+}(f_{0})$.

If some $S_{i}$ or $N_j$ touches the $x$-axis (for the $y$-axis we could reason analogously) in the vertex $V$ 
(see Figure \ref{polozenie}(b)), then $DV$ is the common edge of some $x$-proximate face  of $\Gamma_+(f_0)$ and some
 $x$-proximate face of $\Gamma_+(f_t)$. 
This means that the proximity faces for the $x$-axis of both $\Gamma_{+}(f_{0})$ and
$\Gamma_{+}(f_{t})$ intersect this axis in the same coordinate, viz., $V_x$.


\item The vertex $D\in (xz\text{-plane}\,\cup\,yz\text{-plane})$.  Without loss of generality, we may assume that $D$ belongs to the $xz$-plane
(see Figure \ref{touchxz}).
\begin{figure}[!h]
\includegraphics[%
width=2.1234in
]%
{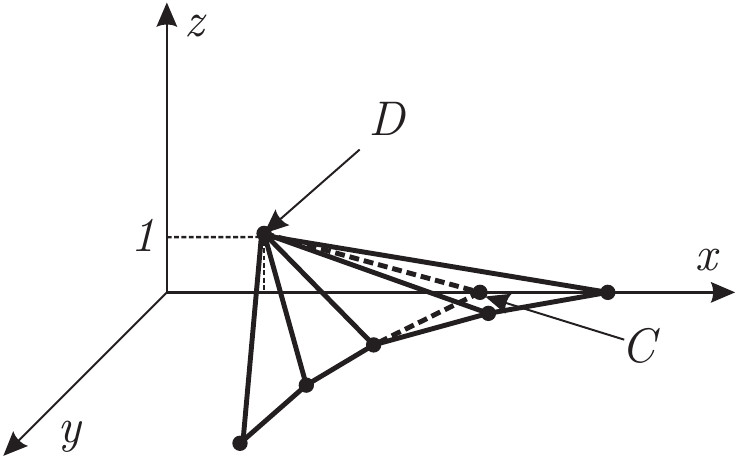}%
\caption{}\label{touchxz}
\end{figure}
Since its $z$-coordinate equals one, we infer that all the
faces $N_{1},\ldots,N_{k}$, $S_{1},\ldots,S_{l}$ are exceptional with respect to the $x$-axis. Combining this observation
with the assumption $\Gamma^2(f) \setminus E(f)\neq \emptyset$, we conclude that
$\Gamma_{+}(f_{t})$ and $\Gamma_{+}(f_{0})$ have the same proximity faces, for all the three axes, and so
$\mathcal{L}_{0}(f_{t})=\mathcal{L}_{0}(f_{0})$ for small $t$. 
\end{enumerate}
This ends the proof.
\end{proof}

\section{Concluding remarks}

Using the main theorem and some known facts, we may obtain additional information on
families of surface singularities.

\begin{corollary}
Let $f_{0}:(\mathbb{C}^{3},0)\rightarrow(\mathbb{C},0)$ be a non-degenerate
isolated surface singularity and let $(f_{t})$ be its non-degenerate
deformation. If $f_{t}$ are pairwise topologically equivalent, then
$\mathcal{L}_{0}(f_{t})=\mathcal{L}_{0}(f_{0})$ for small $t.$
\end{corollary}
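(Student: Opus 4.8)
The plan is to reduce the statement to the main theorem, Theorem~\ref{lconstant}, by showing that the hypothesis of pairwise topological equivalence already forces the deformation to be $\mu$-constant. The only nontrivial ingredient is the classical fact that the Milnor number of an isolated hypersurface singularity is a topological invariant, so once that is in hand the corollary becomes a formal consequence of what has already been proved.

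First I would unwind the meaning of ``topologically equivalent'' as topological $\mathcal{R}$-equivalence: for small $t$ and $s$ there is a germ of homeomorphism $h$ with $f_{t}=f_{s}\circ h$. Such an $h$ carries the level sets of $f_{t}$ onto those of $f_{s}$, hence it carries a Milnor fibre of $f_{t}$ onto a Milnor fibre of $f_{s}$; in particular the two Milnor fibres are homeomorphic. Since $f_{0}$ is an isolated singularity in $\mathbb{C}^{3}$, so are the nearby $f_{t}$, and for such singularities the Milnor number equals the rank of the reduced middle homology of the Milnor fibre. Homeomorphic fibres therefore have equal Milnor numbers, and taking $s=0$ gives $\mu(f_{t})=\mu(f_{0})$ for all small $t$.

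Consequently $(f_{t})$ is a non-degenerate deformation of the non-degenerate isolated surface singularity $f_{0}$ along which the Milnor number stays constant, i.e.\ a non-degenerate $\mu$-constant deformation. The hypotheses of Theorem~\ref{lconstant} are thus met verbatim, and that theorem yields $\mathcal{L}_{0}(f_{t})=\mathcal{L}_{0}(f_{0})$ for small $t$, which is exactly the assertion. There is essentially no genuine obstacle to overcome; the only point deserving care is the precise sense of topological equivalence, namely that it is right-equivalence through a homeomorphism of the source (equivalently, that it identifies the Milnor fibrations), so that the topological invariance of $\mu$ can legitimately be invoked. Once that is pinned down, the corollary follows immediately from the main theorem.
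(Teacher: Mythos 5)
Your proposal is correct and follows exactly the paper's own argument: topological equivalence forces the Milnor numbers to agree, so the deformation is $\mu$-constant and Theorem~\ref{lconstant} applies. The paper simply states the topological invariance of $\mu$ without the justification via Milnor fibres that you spell out.
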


\begin{proof}
Since $f_{t}$ are pairwise topologically equivalent, they have the same Milnor
numbers. So, $(f_{t})$ is $\mu$-constant deformation of $f_{0}.$ By the main
theorem, we get $\mathcal{L}_{0}(f_{t})=\mathcal{L}_{0}(f_{0})$ for small $t.$
\end{proof}

We predict the main theorem is true also in the $n$-dimensional case. In the proof
of our theorem we use two facts on surface singularities: the first one -- a
formula for the \L ojasiewicz exponent of non-degenerate surface singularities
(Theorem \ref{thpro}), and the second one -- the characterization of these Newton
polyhedra of surface singularities that have the same Newton numbers
{\cite[Thm.\ 1]{BKW19}}. The second fact has been
recently generalized by M.\ Leyton-{\'A}lvarez, H.\ Mourtada and M.\ Spivakovsky in
\cite[Thm.\ 2.15]{LMS20} to the $n$-dimensional case.  Since the
\L ojasiewicz exponent is one and the same for isolated non-degenerate singularities with a given Newton polyhedron
(\cite{Brz19}), there ``only'' remains the question about a formula for the exponent,
a formula expressed in terms of the Newton polyhedron (cf.\ Remark \ref{pytanko}).
\medskip

\bibliographystyle{plain}
\bibliography{LojasBKO11_arxiv}

\end{document}